\newtheorem{theorem}{Theorem}
\theoremstyle{plain}
\newtheorem{definition}{Definition}
\newtheorem{lemma}{Lemma}
\newtheorem{remark}{Remark}
\numberwithin{equation}{section}
\begin{document}
\title[On new inequalities]{New fractional inequalities of Ostrowski-Gr\"{u}%
ss type }
\author{Mehmet Zeki Sarikaya$^{\star }$}
\address{Department of Mathematics, Faculty of Science and Arts, D\"{u}zce
University, D\"{u}zce, Turkey}
\email{sarikayamz@gmail.com}
\thanks{$^{\star }$corresponding author}
\author{Hatice YALDIZ}
\email{yaldizhatice@gmail.com}
\author{Nagihan Basak}
\email{nagihan.basak@hotmail.com}
\subjclass[2000]{ 26D15, 41A55, 26D10, 26A33 }
\keywords{Montgomery identity, fractional integral, Ostrowski inequality, Gr%
\"{u}ss inequality.}

\begin{abstract}
In this paper, we improve and further generalize some Ostrowski-Gr\"{u}ss
type inequalities for the fractional integrals by using new Montogomery
identities.
\end{abstract}

\maketitle

\section{Introduction and preliminary results}

The inequality of Ostrowski gives us an estimate for the deviation of the
values of a smooth function from its mean value. More precisely, if $%
f:[a,b]\rightarrow \mathbb{R}$ is a differentiable function with bounded
derivative, then%
\begin{equation*}
\left\vert f(x)-\frac{1}{b-a}\int\limits_{a}^{b}f(t)dt\right\vert \leq \left[
\frac{1}{4}+\frac{(x-\frac{a+b}{2})^{2}}{(b-a)^{2}}\right] (b-a)\left\Vert
f^{\prime }\right\Vert _{\infty }
\end{equation*}%
for every $x\in \lbrack a,b]$. Moreover the constant $1/4$ is the best
possible.

For some generalizations of this classic fact see the book \cite[p.468-484]%
{[13]} by Mitrinovic, Pecaric and Fink. A simple proof of this fact can be \
done by using the following identity \cite{[13]}:

If $f:[a,b]\rightarrow \mathbb{R}$ is differentiable on $[a,b]$ with the
first derivative $f^{\prime }$ integrable on $[a,b],$ then Montgomery
identity holds:%
\begin{equation*}
f(x)=\frac{1}{b-a}\int\limits_{a}^{b}f(t)dt+\int\limits_{a}^{b}P_{1}(x,t)f^{%
\prime }(t)dt,
\end{equation*}%
where $P_{1}(x,t)$ is the Peano kernel defined by%
\begin{equation*}
P_{1}(x,t):=\left\{ 
\begin{array}{ll}
\dfrac{t-a}{b-a}, & a\leq t<x \\ 
&  \\ 
\dfrac{t-b}{b-a}, & x\leq t\leq b.%
\end{array}%
\right.
\end{equation*}

This inequality provides an upper bound for the approximation of integral
mean of a function $f$ by the functional value $f(x)$ at $x\in \lbrack a,b].$
In 2001, , Cheng \cite{[1]} proved the following Ostrowski-Gr\"{u}ss type
integral inequality.

\begin{theorem}
\label{t1} Let $I\subset 
\mathbb{R}
$ be an open interval, $a,b\in I,a<b$. If $f:I\rightarrow 
\mathbb{R}
$ is a differentiable function such that there exist constants $\gamma
,\Gamma \in 
\mathbb{R}
$, with $\gamma \leq f^{^{\prime }}\left( x\right) \leq \Gamma $, $x\in %
\left[ a,b\right] $. Then have%
\begin{eqnarray}
&&\left\vert \frac{1}{2}f\left( x\right) -\frac{\left( x-b\right) f\left(
b\right) -\left( x-a\right) f\left( a\right) }{2\left( b-a\right) }-\frac{1}{%
b-a}\int\limits_{a}^{b}f\left( t\right) dt\right\vert  \label{hh} \\
&&  \notag \\
&\leq &\frac{\left( x-a\right) ^{2}+\left( b-x\right) ^{2}}{8\left(
b-a\right) }\left( \Gamma -\gamma \right) \text{, for all }x\in \left[ a,b%
\right] .  \notag
\end{eqnarray}
\end{theorem}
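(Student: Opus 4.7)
The plan is to rewrite the left-hand side of (\ref{hh}) as a single integral $\int_a^b K(x,t) f'(t)\,dt$ against an explicit piecewise-linear kernel $K$, and then bound this integral by exploiting the hypothesis $\gamma \le f'\le \Gamma$ via a centering trick. To construct $K$, I would pair the Montgomery identity recalled above,
$$f(x) - \frac{1}{b-a}\int_a^b f(t)\,dt = \int_a^b P_1(x,t)\, f'(t)\,dt,$$
with a companion ``trapezoidal'' identity derived directly from the fundamental theorem of calculus: writing $f(x)-f(a) = \int_a^x f'$ and $f(b) - f(x) = \int_x^b f'$ and collecting terms yields
$$\frac{(b-x)f(b) + (x-a)f(a)}{b-a} - f(x) = \int_a^b S(x,t)\, f'(t)\,dt,$$
where $S(x,t) = -\tfrac{x-a}{b-a}$ on $[a,x)$ and $S(x,t) = \tfrac{b-x}{b-a}$ on $[x,b]$. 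Substituting both identities into the left-hand side of (\ref{hh}), the two $f(x)$ contributions cancel and one obtains the representation $L = \int_a^b K(x,t) f'(t)\,dt$ with $K := P_1 + \tfrac12 S$, namely
$$K(x,t) = \begin{cases} \dfrac{2t-a-x}{2(b-a)}, & a\le t<x, \\ \dfrac{2t-b-x}{2(b-a)}, & x\le t\le b. \end{cases}$$

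Next, a direct computation gives $\int_a^b K(x,t)\,dt = 0$: on each subinterval $K(x,\cdot)$ is a linear function vanishing at the midpoint ($\tfrac{a+x}{2}$ and $\tfrac{b+x}{2}$ respectively), so both pieces integrate to zero. Consequently, for every constant $c$, $\int_a^b K f' = \int_a^b K (f' - c)$, and choosing $c = \tfrac{\gamma+\Gamma}{2}$ yields the pointwise bound $|f'(t) - c| \le \tfrac{\Gamma-\gamma}{2}$. Therefore the absolute value of the left-hand side of (\ref{hh}) is at most $\tfrac{\Gamma-\gamma}{2}\int_a^b|K(x,t)|\,dt$. On each subinterval $K$ is linear through its midpoint, so the symmetry reduction is immediate and gives $\int_a^x |K(x,t)|\,dt = \tfrac{(x-a)^2}{4(b-a)}$ and $\int_x^b |K(x,t)|\,dt = \tfrac{(b-x)^2}{4(b-a)}$. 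Summing and multiplying by $\tfrac{\Gamma-\gamma}{2}$ delivers exactly $\tfrac{(x-a)^2+(b-x)^2}{8(b-a)}(\Gamma-\gamma)$, as required.

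The main obstacle is the first step: spotting the kernel $K$ so that the somewhat unusual combination on the left-hand side of (\ref{hh}) collapses to a single weighted integral of $f'$. Combining Montgomery with the trapezoidal identity in precisely the ratio $1 : \tfrac12$ is what forces the $f(x)$ terms to cancel; without it, one is left with an expression that cannot be centered by a constant and the Grüss-style argument fails to produce the factor $\Gamma-\gamma$. Once the correct identity is established, the remainder is routine (verifying $\int K = 0$, centering $f'$ by its mid-bound $\tfrac{\gamma+\Gamma}{2}$, and evaluating two elementary absolute-value integrals of a piecewise linear function).
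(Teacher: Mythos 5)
Your proof is correct, and it follows essentially the route the paper itself points to: the paper does not reprove Theorem \ref{t1} (it is quoted from Cheng), but its Remark after Lemma \ref{lm} identifies exactly your kernel $K(x,t)$ (the $\alpha=1$ case of the identity (\ref{9}), i.e.\ the Tong--Guan identity) as the basis of a simple proof. Your derivation of that identity from Montgomery plus a trapezoidal identity, the observation $\int_a^b K(x,t)\,dt=0$, the centering of $f'$ at $\tfrac{\gamma+\Gamma}{2}$, and the evaluation $\int_a^b|K(x,t)|\,dt=\tfrac{(x-a)^2+(b-x)^2}{4(b-a)}$ all check out and yield the stated bound.
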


Theorem \ref{t1} is a generalization of the following Ostrowski-Gr\"{u}ss
type integral inequality, which was firstly by Dragomir and Wang in \cite%
{[2]} and further improved by Matic et al. in \cite{[5]}.

\begin{theorem}
\label{t2} Let the assumptions of Theorem \ref{t1} hold. Then for all $x\in %
\left[ a,b\right] $, we have
\end{theorem}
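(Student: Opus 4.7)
The plan is to combine the Montgomery identity recalled in the introduction with a Chebyshev/Gr\"{u}ss-type functional estimate. Writing $\mathcal{M}(g) := \frac{1}{b-a}\int_a^b g(t)\,dt$ and $T(h,g) := \mathcal{M}(hg) - \mathcal{M}(h)\mathcal{M}(g)$, the Montgomery identity reads
\[
f(x) - \mathcal{M}(f) = \int_a^b P_1(x,t) f'(t)\,dt = (b-a)\bigl[T(P_1(x,\cdot),f') + \mathcal{M}(P_1(x,\cdot))\mathcal{M}(f')\bigr].
\]
Direct evaluation of the piecewise kernel gives $\int_a^b P_1(x,t)\,dt = x - \frac{a+b}{2}$, while the fundamental theorem of calculus gives $\mathcal{M}(f') = \frac{f(b)-f(a)}{b-a}$. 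Substituting these into the above identity yields
\[
f(x) - \mathcal{M}(f) - \frac{f(b)-f(a)}{b-a}\left(x-\frac{a+b}{2}\right) = (b-a)\,T\bigl(P_1(x,\cdot),f'\bigr),
\]
so the quantity to be estimated in Theorem \ref{t2} is precisely $(b-a)\,|T(P_1,f')|$.

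Next, I would apply the pre-Gr\"{u}ss inequality to bound this functional. Since $\gamma \le f'(t) \le \Gamma$, the Cauchy--Schwarz form of the Chebyshev functional yields $|T(h,f')| \le \frac{\Gamma-\gamma}{2}\sqrt{T(h,h)}$ for any square-integrable $h$. Taking $h = P_1(x,\cdot)$ reduces matters to computing $T(P_1,P_1)$ explicitly: using $\int_a^b P_1^2(x,t)\,dt = \frac{(x-a)^3+(b-x)^3}{3(b-a)^2}$ together with the already-obtained mean $\mathcal{M}(P_1) = \frac{x-(a+b)/2}{b-a}$, one gets a closed form in the variables $(x-a)$ and $(b-x)$. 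The weaker Dragomir--Wang estimate would instead follow from the crude bound $|T(h,g)| \le \frac{1}{4}(\Gamma_h - \gamma_h)(\Gamma - \gamma)$ applied with $\Gamma_{P_1} - \gamma_{P_1} = 1$, but the Matic et al.\ refinement requires the sharper pre-Gr\"{u}ss route.

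The main obstacle is not conceptual but algebraic: simplifying $T(P_1,P_1)$ into the compact expression that appears in the statement requires careful manipulation of the polynomials $(x-a)^3 + (b-x)^3$ and $(x-(a+b)/2)^2$, which do not combine transparently. Once this simplification is performed, multiplying by $(b-a)$ and taking absolute values produces the desired Ostrowski--Gr\"{u}ss inequality, with the constant explicitly traceable to $\sqrt{T(P_1,P_1)}$ and to the factor $(\Gamma-\gamma)/2$ coming from the variance bound on $f'$.
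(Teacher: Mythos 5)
Your plan is sound and, carried to completion, it does prove Theorem \ref{t2}; note, however, that the paper itself offers no proof of this statement --- it is quoted as a known result of Dragomir--Wang \cite{[2]} as improved by Matic et al.\ \cite{[5]} --- so there is no internal argument to compare against. Two clarifications on your route. First, the bound actually asserted in (\ref{hhh}) has constant $\tfrac14$, and this already follows from the plain Gr\"{u}ss inequality $|T(h,g)|\le\tfrac14(\sup h-\inf h)(\sup g-\inf g)$ applied with $h=P_1(x,\cdot)$, whose oscillation is exactly $\frac{x-a}{b-a}+\frac{b-x}{b-a}=1$; the pre-Gr\"{u}ss (Cauchy--Schwarz) route is not needed for the statement as written, though it yields the sharper Matic et al.\ constant $\tfrac{1}{4\sqrt{3}}$, which of course implies it. Second, the ``algebraic obstacle'' you flag dissolves completely: writing $u=x-a$, $v=b-x$, one finds
\begin{equation*}
T\bigl(P_1,P_1\bigr)=\frac{u^3+v^3}{3(u+v)^3}-\frac{(u-v)^2}{4(u+v)^2}
=\frac{4(u^2-uv+v^2)-3(u-v)^2}{12(u+v)^2}=\frac{(u+v)^2}{12(u+v)^2}=\frac{1}{12},
\end{equation*}
independent of $x$, so $\sqrt{T(P_1,P_1)}=\frac{1}{2\sqrt{3}}$ and the pre-Gr\"{u}ss bound gives $\frac{(b-a)(\Gamma-\gamma)}{4\sqrt{3}}\le\frac{(b-a)(\Gamma-\gamma)}{4}$. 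All the identities you quote ($\int_a^b P_1(x,t)\,dt=x-\frac{a+b}{2}$, the value of $\mathcal{M}(f')$, and $\int_a^b P_1^2(x,t)\,dt=\frac{(x-a)^3+(b-x)^3}{3(b-a)^2}$) check out, so the decomposition $f(x)-\mathcal{M}(f)-\frac{f(b)-f(a)}{b-a}\bigl(x-\frac{a+b}{2}\bigr)=(b-a)\,T\bigl(P_1(x,\cdot),f'\bigr)$ is correct and either Gr\"{u}ss-type estimate finishes the proof.
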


\begin{eqnarray}
&&\left\vert f\left( x\right) -\frac{f\left( b\right) -f\left( a\right) }{b-a%
}\left( x-\frac{a+b}{2}\right) -\frac{1}{b-a}\int\limits_{a}^{b}f\left(
t\right) dt\right\vert  \label{hhh} \\
&&  \notag \\
&\leq &\frac{1}{4}\left( b-a\right) \left( \Gamma -\gamma \right) .  \notag
\end{eqnarray}

The above two inequalities are both connections between the Ostrowski
inequality \cite{[3]}\ and the Gr\"{u}ss inequality \cite{[4]}\ and can be
applied to bound some special mean and some numerical quadrature rules.

During the past few years many researchers have given considerable attention
to the above inequalities and various generalizations, extensions and
variants of these inequalities have appeared in the literature, see \cite%
{[1]}, \cite{[2]}, \cite{[5]}, \cite{[24]} and the references cited therein.
For recent results and generalizations concerning Ostrowski and Gr\"{u}ss
inequalities, we refer the reader to the recent papers \cite{[1]}-\cite{[5]}%
, \cite{[13]}-\cite{[16]}, \cite{[18]}-\cite{[24]}.

The theory of fractional calculus has known an intensive development over
the last few decades. It is shown that derivatives and\ integrals of
fractional type provide an adequate mathematical modelling of real objects\
and processes see (\cite{[6]}, \cite{[7]}, \cite{[9]}, \cite{[10]}, \cite%
{[21]}, \cite{[22]}). Therefore, the study of fractional differential
equations need more developmental of inequalities of fractional type. The
main aim of this work is to develop new weighted Montgomery identity for
Riemann-Liouville fractional integrals that will be used to establish new
weighted Ostrowski inequalities. Let us begin by introducing this type of
inequality.

In \cite{[6]} and \cite{[21]}, the authors established some inequalities for
differentiable mappings which are connected with Ostrowski type inequality
by used the Riemann-Liouville fractional integrals, and they used the
following lemma to prove their results:

\begin{lemma}
\label{l} Let $f:I\subset \mathbb{R}\rightarrow \mathbb{R}$ be
differentiable function on $I^{\circ }$ with $a,b\in I$ ($a<b$) and $%
f^{\prime }\in L_{1}[a,b]$, then%
\begin{equation}
f(x)=\frac{\Gamma (\alpha )}{b-a}(b-x)^{1-\alpha }{\Large J}_{a}^{\alpha
}f(b)-{\Large J}_{a}^{\alpha -1}(P_{2}(x,b)f(b))+{\Large J}_{a}^{\alpha
}(P_{2}(x,b)f^{^{\prime }}(b)),\ \ \ \alpha \geq 1,  \label{z}
\end{equation}%
where $P_{2}(x,t)$ is the fractional Peano kernel defined by%
\begin{equation*}
P_{2}(x,t)=\left\{ 
\begin{array}{ll}
\dfrac{t-a}{b-a}(b-x)^{1-\alpha }\Gamma (\alpha ), & a\leq t<x \\ 
&  \\ 
\dfrac{t-b}{b-a}(b-x)^{1-\alpha }\Gamma (\alpha ), & x\leq t\leq b.%
\end{array}%
\right.
\end{equation*}
\end{lemma}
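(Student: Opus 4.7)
My plan is to verify identity \eqref{z} by expanding its right-hand side directly, using the Riemann--Liouville definition $J_{a}^{\alpha}g(b)=\frac{1}{\Gamma(\alpha)}\int_{a}^{b}(b-t)^{\alpha-1}g(t)\,dt$, splitting every occurrence of $P_{2}(x,t)$ along its piecewise definition at $t=x$, and then performing a single integration by parts to remove $f'$ from the last summand.

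Carrying this out, the first summand reduces to $\frac{(b-x)^{1-\alpha}}{b-a}\int_{a}^{b}(b-t)^{\alpha-1}f(t)\,dt$. Using $\Gamma(\alpha)/\Gamma(\alpha-1)=\alpha-1$, the second summand becomes
\[
-\frac{(b-x)^{1-\alpha}(\alpha-1)}{b-a}\Big[\int_{a}^{x}(b-t)^{\alpha-2}(t-a)f(t)\,dt-\int_{x}^{b}(b-t)^{\alpha-1}f(t)\,dt\Big].
\]
For the third summand I would split at $t=x$ into two integrals of the form $\int (b-t)^{\alpha-1}(t-c)f'(t)\,dt$ with $c=a$ and $c=b$, and then integrate each by parts with $v=f(t)$. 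The boundary contributions at $t=a$ and $t=b$ vanish (the factors $t-a$ and $(b-t)^{\alpha}$ respectively are zero there), while the two nontrivial boundary values at $t=x$ combine to $(b-x)^{\alpha-1}(x-a)f(x)+(b-x)^{\alpha}f(x)=(b-x)^{\alpha-1}(b-a)f(x)$. After multiplication by the external factor $\frac{(b-x)^{1-\alpha}}{b-a}$ this boundary sum collapses to exactly $f(x)$.

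The remaining task is to show that the three integral residues produced by integration by parts cancel against the first two summands. Specifically, the $(t-a)$-piece yields a residue $(\alpha-1)\int_{a}^{x}(b-t)^{\alpha-2}(t-a)f(t)\,dt-\int_{a}^{x}(b-t)^{\alpha-1}f(t)\,dt$ and the $(t-b)$-piece yields $-\alpha\int_{x}^{b}(b-t)^{\alpha-1}f(t)\,dt$. Adding these to the first and second summands, the $(b-t)^{\alpha-2}(t-a)$ integrals cancel thanks to the $\alpha-1$ supplied by $\Gamma(\alpha)/\Gamma(\alpha-1)$, the $\int_{a}^{x}(b-t)^{\alpha-1}f(t)\,dt$ terms cancel against part of the first summand, and the $\int_{x}^{b}(b-t)^{\alpha-1}f(t)\,dt$ terms cancel because the coefficients sum to $1+(\alpha-1)-\alpha=0$. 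I expect this bookkeeping---matching three distinct integral residues against the weights coming from two different fractional operators---to be the main obstacle, and the critical observation making it work is the appearance of the order $\alpha-1$ (rather than $\alpha$) in the middle term of \eqref{z}, forced precisely by $\Gamma(\alpha)=(\alpha-1)\Gamma(\alpha-1)$. Once this is verified, the classical Montgomery identity is recovered on setting $\alpha=1$, since then $P_{2}(x,b)=0$ kills the middle term and $P_{2}$ coincides with the Peano kernel $P_{1}$.
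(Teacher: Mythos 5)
Your verification is correct: the boundary contributions at $t=x$ do assemble to $(b-x)^{\alpha-1}(b-a)f(x)$, and the three integral residues cancel with exactly the coefficients you list (in particular $1+(\alpha-1)-\alpha=0$ for the $\int_x^b(b-t)^{\alpha-1}f$ terms), so expanding the right-hand side returns $f(x)$. The paper itself states this lemma without proof, importing it from \cite{[6]} and \cite{[21]}, where the argument is the same integration by parts of $J_a^{\alpha}(P_{2}(x,b)f^{\prime}(b))$ run in the forward direction; your reverse-direction bookkeeping is an equivalent and complete substitute.
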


In this article, we use the Riemann-Liouville fractional integrals to
establish some new integral inequalities of Ostrowski-Gr\"{u}ss type. From
our results, the classical Ostrowski-Gr\"{u}ss type inequalities can be
deduced as some special cases.

Firstly, we give some necessary definitions and mathematical preliminaries
of fractional calculus theory which are used further in this paper. More
details, one can consult \cite{[12]}, \cite{[17]}.

\begin{definition}
The Riemann-Liouville fractional integral operator of order $\alpha \geq 0$
with $a\geq 0$ is defined as%
\begin{eqnarray*}
J_{a}^{\alpha }f(x) &=&\frac{1}{\Gamma (\alpha )}\dint\limits_{a}^{x}(x-t)^{%
\alpha -1}f(t)dt, \\
J_{a}^{0}f(x) &=&f(x).
\end{eqnarray*}
\end{definition}

Recently, many authors have studied a number of inequalities by used the
Riemann-Liouville fractional integrals, see (\cite{[6]}, \cite{[7]}, \cite%
{[9]}, \cite{[10]}, \cite{[21]}, \cite{[22]}) and the references cited
therein.

\section{Main Results}

\begin{lemma}
\label{lm} Let $f:I\subset \mathbb{R}\rightarrow \mathbb{R}$ be a
differentiable function on $I^{\circ }$ with $a,b\in I$ ($a<b$)$,$ $\alpha
\geq 1$ and $f^{\prime }\in L_{1}[a,b]$, then the generalization of the
Montgomery identity for fractional integrals holds:%
\begin{eqnarray}
f\left( x\right) &=&(\alpha +1)\Gamma (\alpha )\frac{\left( b-x\right)
^{1-\alpha }}{(b-a)}{\Large J}_{a}^{\alpha }f(b)-{\Large J}_{a}^{\alpha
-1}(P_{2}(x,b)f(b))-\frac{\left( b-x\right) ^{2-\alpha }}{(b-a)}\Gamma
(\alpha )J_{a}^{\alpha -1}f\left( b\right)  \notag \\
&&  \label{9} \\
&&-\frac{\left( b-x\right) ^{1-\alpha }(x-a)}{\left( b-a\right) ^{2-\alpha }}%
f(a)+2J_{a}^{\alpha }\left( K_{1}\left( x,b\right) f^{^{\prime }}\left(
b\right) \right) ,  \notag
\end{eqnarray}%
where $K_{1}\left( x,t\right) $ is the fractional Peano kernel defined by%
\begin{equation}
K_{1}\left( x,t\right) :=\left\{ 
\begin{array}{ll}
\left( t-\dfrac{a+x}{2}\right) \dfrac{\left( b-x\right) ^{1-\alpha }}{b-a}%
\Gamma \left( \alpha \right) , & t\in \lbrack a,x) \\ 
\left( t-\dfrac{b+x}{2}\right) \dfrac{\left( b-x\right) ^{1-\alpha }}{b-a}%
\Gamma \left( \alpha \right) , & t\in \lbrack x,b].%
\end{array}%
\right.  \label{91}
\end{equation}
\end{lemma}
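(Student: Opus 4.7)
The plan is to reduce the new identity \eqref{9} to Lemma \ref{l} plus one additional integration-by-parts computation, by exploiting the fact that the kernel $K_{1}(x,t)$ in \eqref{91} is, up to a common factor on each sub-interval, the average of two linear expressions. Concretely, on $[a,x)$ I would write $t-\tfrac{a+x}{2}=\tfrac{1}{2}\bigl[(t-a)+(t-x)\bigr]$ and on $[x,b]$ write $t-\tfrac{b+x}{2}=\tfrac{1}{2}\bigl[(t-b)+(t-x)\bigr]$. Since $P_{2}(x,t)$ carries exactly the factor $\tfrac{t-a}{b-a}(b-x)^{1-\alpha}\Gamma(\alpha)$ on $[a,x)$ and $\tfrac{t-b}{b-a}(b-x)^{1-\alpha}\Gamma(\alpha)$ on $[x,b]$, this yields a single clean identity valid throughout $[a,b]$,
\begin{equation*}
K_{1}(x,t)=\tfrac{1}{2}P_{2}(x,t)+\frac{(t-x)(b-x)^{1-\alpha}\Gamma(\alpha)}{2(b-a)}.
\end{equation*}

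Multiplying by $(b-t)^{\alpha-1}f^{\prime}(t)/\Gamma(\alpha)$ and integrating from $a$ to $b$ gives
\begin{equation*}
2\,J_{a}^{\alpha}\bigl(K_{1}(x,b)f^{\prime}(b)\bigr)=J_{a}^{\alpha}\bigl(P_{2}(x,b)f^{\prime}(b)\bigr)+\frac{(b-x)^{1-\alpha}}{b-a}\int_{a}^{b}(b-t)^{\alpha-1}(t-x)f^{\prime}(t)\,dt.
\end{equation*}
For the first summand I would invoke Lemma \ref{l} directly, which expresses $J_{a}^{\alpha}(P_{2}(x,b)f^{\prime}(b))$ in terms of $f(x)$, $J_{a}^{\alpha}f(b)$ and $J_{a}^{\alpha-1}(P_{2}(x,b)f(b))$. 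So the only new work is the ordinary integral $Q:=\int_{a}^{b}(b-t)^{\alpha-1}(t-x)f^{\prime}(t)\,dt$.

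For $Q$ I would integrate by parts with $u=(b-t)^{\alpha-1}(t-x)$ and $dv=f^{\prime}(t)\,dt$. The boundary contribution at $t=b$ vanishes because $(b-t)^{\alpha-1}\to 0$ (using the hypothesis $\alpha\geq 1$; the case $\alpha=1$ is covered by the classical Montgomery identity), while the contribution at $t=a$ produces exactly $(x-a)(b-a)^{\alpha-1}f(a)$. Differentiating $u$ yields two remaining integrals; in the one with weight $(b-t)^{\alpha-2}$ I would use the decomposition $(t-x)=(t-b)+(b-x)$ to convert it into a combination of $\int(b-t)^{\alpha-1}f(t)\,dt=\Gamma(\alpha)J_{a}^{\alpha}f(b)$ and $(b-x)\int(b-t)^{\alpha-2}f(t)\,dt=(b-x)\Gamma(\alpha)J_{a}^{\alpha-1}f(b)/(\alpha-1)\cdot(\alpha-1)$; combining everything gives
\begin{equation*}
Q=(x-a)(b-a)^{\alpha-1}f(a)-\alpha\Gamma(\alpha)J_{a}^{\alpha}f(b)+(b-x)\Gamma(\alpha)J_{a}^{\alpha-1}f(b).
\end{equation*}

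Finally I would substitute both pieces back into the expression for $2J_{a}^{\alpha}(K_{1}(x,b)f^{\prime}(b))$ and solve for $f(x)$, matching the coefficients in \eqref{9} term by term: the $J_{a}^{\alpha}f(b)$ contributions from Lemma \ref{l} and from $Q$ add to $(\alpha+1)\Gamma(\alpha)(b-x)^{1-\alpha}/(b-a)$ times $J_{a}^{\alpha}f(b)$; the $-J_{a}^{\alpha-1}(P_{2}(x,b)f(b))$ term comes straight from Lemma \ref{l}; and the $J_{a}^{\alpha-1}f(b)$ and $f(a)$ terms come from $Q$ after multiplication by $(b-x)^{1-\alpha}/(b-a)$. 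The main obstacle is simply careful bookkeeping of the $(b-x)^{1-\alpha}$, $(b-a)^{\alpha-1}$ and $\Gamma(\alpha)/\Gamma(\alpha-1)=\alpha-1$ factors so that the final arithmetic reproduces \eqref{9} exactly; there is no analytic difficulty once the decomposition of $K_{1}$ is in hand.
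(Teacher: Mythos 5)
Your proposal follows essentially the same route as the paper: the same decomposition $K_{1}(x,t)=\tfrac{1}{2}P_{2}(x,t)+\tfrac{(t-x)(b-x)^{1-\alpha}\Gamma(\alpha)}{2(b-a)}$, the same appeal to Lemma \ref{l} for the $P_{2}$ piece, and the same integration by parts yielding $Q=(x-a)(b-a)^{\alpha-1}f(a)+(b-x)\Gamma(\alpha)J_{a}^{\alpha-1}f(b)-\Gamma(\alpha+1)J_{a}^{\alpha}f(b)$ (the paper merely applies the split $t-x=(b-x)-(b-t)$ before rather than after integrating by parts). The plan is correct and the bookkeeping you describe reproduces \eqref{9}.
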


\begin{proof}
By definition of $K_{1}\left( x,t\right) $, we have%
\begin{eqnarray*}
&&J_{a}^{\alpha }\left( K_{1}\left( x,b\right) f^{^{\prime }}\left( b\right)
\right) \\
&& \\
&=&\frac{1}{\Gamma \left( \alpha \right) }\dint\limits_{a}^{b}\left(
b-t\right) ^{\alpha -1}K_{1}\left( x,t\right) f^{\prime }\left( t\right) dt
\\
&& \\
&=&\frac{\left( b-x\right) ^{1-\alpha }}{b-a}\left[ \dint\limits_{a}^{x}%
\left( b-t\right) ^{\alpha -1}\left( t-\frac{a+x}{2}\right) f^{^{\prime
}}\left( t\right) dt+\dint\limits_{x}^{b}\left( b-t\right) ^{\alpha
-1}\left( t-\frac{b+x}{2}\right) f^{^{\prime }}\left( t\right) dt\right]
\end{eqnarray*}%
that can be written as%
\begin{equation}
J_{a}^{\alpha }\left( K_{1}\left( x,b\right) f^{^{\prime }}\left( b\right)
\right) =\frac{1}{2}{\Large J}_{a}^{\alpha }(P_{2}(x,b)f^{^{\prime }}(b))+%
\frac{\left( b-x\right) ^{1-\alpha }}{2(b-a)}\dint\limits_{a}^{b}\left(
b-t\right) ^{\alpha -1}\left( t-x\right) f^{^{\prime }}\left( t\right) dt.
\label{10}
\end{equation}%
For term in the right hand side of (\ref{10}) integrating by parts implies
that 
\begin{eqnarray}
&&\dint\limits_{a}^{b}\left( b-t\right) ^{\alpha -1}\left( t-x\right)
f^{^{\prime }}\left( t\right) dt  \notag \\
&=&\left( b-x\right) \dint\limits_{a}^{b}\left( b-t\right) ^{\alpha
-1}f^{^{\prime }}\left( t\right) dt-\dint\limits_{a}^{b}\left( b-t\right)
^{\alpha }f^{^{\prime }}\left( t\right) dt  \label{11} \\
&&  \notag \\
&=&(x-a)\left( b-a\right) ^{\alpha -1}f(a)+\left( b-x\right) \Gamma (\alpha
)J_{a}^{\alpha -1}f\left( b\right) -\Gamma (\alpha +1)J_{a}^{\alpha }f\left(
b\right)  \notag
\end{eqnarray}

Substituting ${\Large J}_{a}^{\alpha }(P_{2}(x,b)f^{^{\prime }}(b))$ in the
Lemma \ref{l} and (\ref{11}) in (\ref{10}), we obtain that%
\begin{eqnarray*}
&&J_{a}^{\alpha }\left( K_{1}\left( x,b\right) f^{^{\prime }}\left( b\right)
\right) \\
&& \\
&=&\frac{1}{2}f(x)-(\alpha +1)\Gamma (\alpha )\frac{\left( b-x\right)
^{1-\alpha }}{2(b-a)}{\Large J}_{a}^{\alpha }f(b)+\frac{1}{2}{\Large J}%
_{a}^{\alpha -1}(P_{2}(x,b)f(b)) \\
&& \\
&&+\frac{\left( b-x\right) ^{1-\alpha }(x-a)}{2}\left( b-a\right) ^{\alpha
-2}f(a)+\frac{\left( b-x\right) ^{2-\alpha }}{2(b-a)}\Gamma (\alpha
)J_{a}^{\alpha -1}f\left( b\right) .
\end{eqnarray*}%
The proof is completed.
\end{proof}

\begin{remark}
Letting $\alpha =1,$ formula (\ref{9}) reduces the following identity%
\begin{equation*}
\frac{1}{2}f\left( x\right) =\frac{1}{(b-a)}\dint\limits_{a}^{b}f(t)dt+\frac{%
\left( x-b\right) f\left( b\right) -(x-a)f(a)}{2(b-a)}+\dint\limits_{a}^{b}K%
\left( x,t\right) f^{^{\prime }}(t)dt
\end{equation*}%
where%
\begin{equation*}
K\left( x,t\right) :=\left\{ 
\begin{array}{ll}
\left( t-\dfrac{a+x}{2}\right) , & t\in \lbrack a,x) \\ 
\left( t-\dfrac{b+x}{2}\right) , & t\in \lbrack x,b]%
\end{array}%
\right.
\end{equation*}%
which was given by Tong and Guan in \cite{[24]}. Using the above identity,
the authors proved another simple proof of Theorem \ref{t1}.
\end{remark}

Now using the new Montgomery identity for fractional integrals (\ref{9}) and
the corresponding fractional Peano kernel (\ref{91}), we derive a new
Ostrowski-Gr\"{u}ss type inequality of fractional type.

\begin{theorem}
Let\ $f$ be a differentiable \ function on $\left[ a,b\right] $ and $%
\left\vert f^{\prime }\left( x\right) \right\vert \leq M$ for any $x\in %
\left[ a,b\right] $. Then the following fractional inequality holds:%
\begin{eqnarray}
&&\left\vert \frac{1}{2}f\left( x\right) -(\alpha +1)\Gamma (\alpha )\frac{%
\left( b-x\right) ^{1-\alpha }}{2(b-a)}{\Large J}_{a}^{\alpha }f(b)+\frac{1}{%
2}{\Large J}_{a}^{\alpha -1}(P_{2}(x,b)f(b))\right.  \notag \\
&&  \label{s} \\
&&\left. +\frac{\left( b-x\right) ^{2-\alpha }}{2(b-a)}\Gamma (\alpha
)J_{a}^{\alpha -1}f\left( b\right) +\frac{\left( b-x\right) ^{1-\alpha }(x-a)%
}{2\left( b-a\right) ^{2-\alpha }}f(a)\right\vert  \notag \\
&&  \notag \\
&\leq &\frac{M\left( b-x\right) ^{1-\alpha }}{\left( b-a\right) }\left[ 
\frac{\left( b-a\right) ^{\alpha }(x-a)+\left( b-x\right) ^{\alpha }(a+b-2x)%
}{2\alpha }\right]  \notag
\end{eqnarray}%
for $\alpha \geq 1.$
\end{theorem}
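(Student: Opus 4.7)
The key observation is that the cluster of terms inside the absolute value on the left-hand side of (\ref{s}) is, up to rearrangement, precisely $J_{a}^{\alpha}(K_{1}(x,b)f^{\prime}(b))$ by Lemma \ref{lm}. Specifically, solving identity (\ref{9}) for $J_{a}^{\alpha}(K_{1}(x,b)f^{\prime}(b))$ shows that the expression to be bounded equals that single fractional integral. So the plan is to take the Montgomery-type identity (\ref{9}) as the starting point and then estimate this one integral using only the hypothesis $|f^{\prime}|\le M$ together with the explicit form of the kernel $K_{1}$ given in (\ref{91}).

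Concretely, I would first write
\begin{equation*}
J_{a}^{\alpha}(K_{1}(x,b)f^{\prime}(b)) = \frac{(b-x)^{1-\alpha}}{b-a}\left[\int_{a}^{x}(b-t)^{\alpha-1}\!\left(t-\tfrac{a+x}{2}\right)\!f^{\prime}(t)\,dt + \int_{x}^{b}(b-t)^{\alpha-1}\!\left(t-\tfrac{b+x}{2}\right)\!f^{\prime}(t)\,dt\right],
\end{equation*}
bring the absolute value inside, and apply the bound $|f^{\prime}(t)|\le M$. The next step is the elementary observation that $\bigl|t-\frac{a+x}{2}\bigr|\le \frac{x-a}{2}$ for $t\in[a,x]$ and $\bigl|t-\frac{b+x}{2}\bigr|\le \frac{b-x}{2}$ for $t\in[x,b]$, which decouples the kernel factor from the weight $(b-t)^{\alpha-1}$.

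What remains are two elementary power integrals,
\begin{equation*}
\int_{a}^{x}(b-t)^{\alpha-1}\,dt = \frac{(b-a)^{\alpha}-(b-x)^{\alpha}}{\alpha},\qquad \int_{x}^{b}(b-t)^{\alpha-1}\,dt = \frac{(b-x)^{\alpha}}{\alpha}.
\end{equation*}
Combining these with the sup-bounds above yields
\begin{equation*}
|J_{a}^{\alpha}(K_{1}(x,b)f^{\prime}(b))| \le \frac{M(b-x)^{1-\alpha}}{2\alpha(b-a)}\Bigl[(x-a)\{(b-a)^{\alpha}-(b-x)^{\alpha}\} + (b-x)^{\alpha+1}\Bigr],
\end{equation*}
and a one-line regrouping $(b-x)^{\alpha+1} - (x-a)(b-x)^{\alpha} = (b-x)^{\alpha}(a+b-2x)$ produces the claimed right-hand side of (\ref{s}).

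There is essentially no serious obstacle: the only mild subtlety is making sure the algebraic rearrangement converting identity (\ref{9}) into an expression for $J_{a}^{\alpha}(K_{1}(x,b)f^{\prime}(b))$ is carried out with correct signs, so that the five-term bracketed expression in (\ref{s}) matches it exactly. Once that bookkeeping is done, the estimate reduces to a crude sup-norm bound on $|t-\frac{a+x}{2}|$ and $|t-\frac{b+x}{2}|$ followed by direct integration, and no further analytic machinery is required.
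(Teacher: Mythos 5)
Your proposal is correct and follows essentially the same route as the paper: both identify the five-term expression on the left of (\ref{s}) with $\frac{1}{\Gamma(\alpha)}\int_a^b(b-t)^{\alpha-1}K_1(x,t)f'(t)\,dt$ via Lemma \ref{lm}, apply $|f'|\le M$, and reduce the problem to bounding $\int_a^x(b-t)^{\alpha-1}\bigl|t-\tfrac{a+x}{2}\bigr|\,dt+\int_x^b(b-t)^{\alpha-1}\bigl|t-\tfrac{b+x}{2}\bigr|\,dt$. The only divergence is cosmetic and occurs in the final step, where you bound $\bigl|t-\tfrac{a+x}{2}\bigr|$ and $\bigl|t-\tfrac{b+x}{2}\bigr|$ by their suprema $\tfrac{x-a}{2}$ and $\tfrac{b-x}{2}$ and integrate the remaining power of $(b-t)$; this yields exactly the intermediate estimate the paper obtains (and attributes to an integration by parts), so the two arguments coincide.
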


\begin{proof}
From Lemma \ref{lm}, we have%
\begin{eqnarray*}
&&\left\vert \frac{1}{2}f\left( x\right) -(\alpha +1)\Gamma (\alpha )\frac{%
\left( b-x\right) ^{1-\alpha }}{2(b-a)}{\Large J}_{a}^{\alpha }f(b)+\frac{1}{%
2}{\Large J}_{a}^{\alpha -1}(P_{2}(x,b)f(b))\right. \\
&& \\
&&\left. +\frac{\left( b-x\right) ^{2-\alpha }}{2(b-a)}\Gamma (\alpha
)J_{a}^{\alpha -1}f\left( b\right) +\frac{\left( b-x\right) ^{1-\alpha }(x-a)%
}{2\left( b-a\right) ^{2-\alpha }}f(a)\right\vert \\
&& \\
&=&\frac{1}{\Gamma \left( \alpha \right) }\left\vert \int_{a}^{b}\left(
b-t\right) ^{\alpha -1}K_{1}\left( x,t\right) f^{\prime }\left( t\right)
dt\right\vert .
\end{eqnarray*}%
Taking into account the assumptions on the function $f$, it yields%
\begin{eqnarray}
&&\left\vert \frac{1}{2}f\left( x\right) -(\alpha +1)\Gamma (\alpha )\frac{%
\left( b-x\right) ^{1-\alpha }}{2(b-a)}{\Large J}_{a}^{\alpha }f(b)+\frac{1}{%
2}{\Large J}_{a}^{\alpha -1}(P_{2}(x,b)f(b))\right.  \notag \\
&&  \notag \\
&&\left. +\frac{\left( b-x\right) ^{2-\alpha }}{2(b-a)}\Gamma (\alpha
)J_{a}^{\alpha -1}f\left( b\right) +\frac{\left( b-x\right) ^{1-\alpha }(x-a)%
}{2\left( b-a\right) ^{2-\alpha }}f(a)\right\vert  \notag \\
&&  \label{14} \\
&\leq &\frac{M\left( b-x\right) ^{1-\alpha }}{\left( b-a\right) }\left[
\dint\limits_{a}^{x}\left( b-t\right) ^{\alpha -1}\left\vert t-\frac{a+x}{2}%
\right\vert dt\right.  \notag \\
&&  \notag \\
&&\left. +\dint\limits_{x}^{b}\left( b-t\right) ^{\alpha -1}\left\vert t-%
\frac{b+x}{2}\right\vert dt\right] .  \notag
\end{eqnarray}%
Noting the left hand side of (\ref{14}) by $I$ then integrating by parts the
right hand side of (\ref{14}), we obtain%
\begin{equation*}
I\leq \frac{M\left( b-x\right) ^{1-\alpha }}{2\left( b-a\right) }\left[ 
\frac{\left( b-a\right) ^{\alpha }-\left( b-x\right) ^{\alpha }}{\alpha }%
\left( x-a\right) +\frac{\left( b-x\right) ^{\alpha +1}}{\alpha }\right] .
\end{equation*}%
Consequently%
\begin{equation*}
I\leq \frac{M\left( b-x\right) ^{1-\alpha }}{\left( b-a\right) }\left[ \frac{%
\left( b-a\right) ^{\alpha }(x-a)+\left( b-x\right) ^{\alpha }(a+b-2x)}{%
2\alpha }\right] .
\end{equation*}%
This completes the proof.
\end{proof}

\begin{remark}
Letting $\alpha =1,$ formula (\ref{s}) reduces the following inequality 
\begin{eqnarray}
&&\left\vert \frac{1}{2}f\left( x\right) -\frac{\left( x-b\right) f\left(
b\right) -(x-a)f(a)}{2(b-a)}-\frac{1}{(b-a)}\dint\limits_{a}^{b}f(t)dt\right%
\vert  \notag \\
&&  \notag \\
&\leq &\frac{M}{\left( b-a\right) }\left[ \frac{(x-a)^{2}+\left( b-x\right)
^{2}}{2}\right] .  \notag
\end{eqnarray}%
which connected with Ostrowski-Gr\"{u}ss type integral inequality. If we
take $x=\frac{a+b}{2}$ in this inequality, it follows that%
\begin{eqnarray}
&&\left\vert \frac{1}{2}f\left( \frac{a+b}{2}\right) +\frac{f\left( b\right)
+f(a)}{4}-\frac{1}{(b-a)}\dint\limits_{a}^{b}f(t)dt\right\vert  \notag \\
&&  \notag \\
&\leq &\frac{M\left( b-a\right) }{4}.  \notag
\end{eqnarray}
\end{remark}

\end{document}